\documentclass[12pt,leqno]{amsart}
\topmargin=0.02cm
\textwidth =  17cm
\textheight = 23cm
\baselineskip=11pt
\setlength{\oddsidemargin}{0.01 pt}
\setlength{\evensidemargin}{0.01 pt}

\usepackage{amsmath,amsfonts,amssymb,amsthm,amscd}
\theoremstyle{plain}
\newtheorem{theorem}{Theorem}[section]
\newtheorem{lemma}[theorem]{Lemma}

\theoremstyle{definition}
\newtheorem{defn}{Definition}[section]
\theoremstyle{remark}

\usepackage{mathtools}
\title[Existence of finite global norm of potential vector field]{Existence of finite global norm of potential vector field in a Ricci soliton}
\author[A. A. Shaikh, C. K. Mondal and P. Mandal]{Absos Ali Shaikh$^{1*}$, Chandan Kumar Mondal$^2$ and Prosenjit Mandal$^3$}

\address{\noindent\newline $^{1,2,3}$Department of Mathematics,\newline The University of Burdwan,Golapbag,\newline Purba Bardhaman-713101,\newline West Bengal, India}
\email{$^1$aask2003@yahoo.co.in, aashaikh@math.buruniv.ac.in}
\email{$^2$chan.alge@gmail.com}
\email{$^3$prosenjitmandal235@gmail.com}

\begin{document}
\begin{abstract}
In this article, we investigate global norm of potential vector field in Ricci soliton. In particular, we have deduced certain conditions so that the potential vector field has finite global norm in expanding Ricci soliton. We have also showed that if the potential vector field has finite global norm in complete non-compact Ricci soliton having finite volume, then the scalar curvature becomes constant.  
\end{abstract}

\noindent\footnotetext{ $^*$ Corresponding author.\\ $\mathbf{2010}$\hspace{5pt}Mathematics\; Subject\; Classification: 53C20; 53C21; 53C44.\\ 
{Key words and phrases: Expanding Ricci soliton, scalar curvature, Global finite norm, Riemannian manifold. } }
\maketitle

\section{Introduction and preliminaries}
A Riemannian manifold $(M,g)$ is called the Ricci soliton \cite{HA82} if its Ricci tensor $Ric$ satisfies the following equation:
\begin{equation}\label{y1}
2Ric+\pounds_Xg=2\lambda g,
\end{equation}
where $\pounds_Xg$ is the lie derivative of the metric tensor $g$ with respect to the vector field $X$. In tensors of local coordinates system, (\ref{y1}) can also be expressed in the following form
\begin{equation}\label{y2}
2R_{ij}+\nabla_iX_j+\nabla_jX_i=2\lambda g_{ij},
\end{equation} 
where $R_{ij}$ denotes the components of the Ricci tensor. The Ricci soliton is said to be expanding (resp., steady and shrinking) if $\lambda<0$ (resp., $\lambda=0$ and $\lambda>0$). The vector field $X$ is known as potential vector field of the Ricci soliton and if $X$ is the gradient of some smooth function $f\in C^\infty(M)$, then $f$ is called the potential function and in this case the Ricci soliton is called the gradient Ricci soliton. After normalization, gradient expanding Ricci soliton reduces to 
\begin{equation}\label{e3}
R_{ij}+\nabla_i\nabla_jf=-\frac{1}{2}g_{ij}.
\end{equation}
\par The study of curvature estimation in Ricci soliton is a well known research area in differential geometry. In 2009, Zhang \cite{ZH09} proved that the scalar curvature is non-negative in case of steady and shrinking gradient Ricci soliton and the scalar curvature is bounded below in case of expanding gradient Ricci soliton. Later, Chow et. al \cite{CLY11} improved this estimation for shrinking gradient Ricci soliton. Petersen and Wylie \cite{PW09} proved that a gradient expanding Ricci soliton with constant scalar curvature $R$ satisfies $-\frac{n}{2}\leq R\leq 0$ and if $R=-\frac{n}{2}$, the metric becomes Einstein, and also Pigola-Rimoldi-Setti \cite{PRS11} and Zhang \cite{ZH11} independently showed that for such a soliton the scalar curvature $R\geq-\frac{n}{2}$. Again,  Mondal and Shaikh \cite{CA20} studied gradient Ricci soliton having non-negative Ricci curvature and realizing convex potential which fulfills finite weighted Dirichlet condition and proved that in such a case scalar curvature vanishes. Recently, Chen \cite{CH20} proved that if the Ricci curvature in a gradient expanding Ricci soliton satisfies $$\lim\limits_{l(x)\rightarrow\infty}l(x)^2|Ric|=0,$$
where $l(x)$ is the distance of $x$ from a fixed point, then the scalar curvature is non-negative, and also showed that if the Ricci curvature in a gradient expanding Ricci soliton is non-positive, then the scalar curvature is negative. In the present paper, we have deduced certain conditions so that the potential vector field has finite global norm in expanding Ricci soliton. We have also showed that if the potential vector field has finite global norm in complete non-compact Ricci soliton having finite volume, then the scalar curvature becomes constant.  Throughout this article, by the notation $(M,g)$ or $M$ we denote the Riemannian manifold $M$ with the Riemannian metric $g$.
\par For any $k$, $0\leq k\leq n$, let $\Lambda^k(M)$ be the vector space of all smooth $k$-forms in $M$. For any $\omega,\eta\in \Lambda^k(M)$, the local inner product of $\omega$ and $\eta$ is denoted by $(\omega,\eta)$ and is defined by, see \cite[p. 149]{MO01},
$$(\omega,\eta)=\omega_{i_1,\cdots,i_k}\eta^{i_1,\cdots,i_k},$$
where $\omega=\omega_{i_1,\cdots,i_k}dx^{i_1}\wedge\cdots\wedge dx^{i_k}$, $\eta=\eta_{i_1,\cdots,i_k}dx^{i_1}\wedge\cdots\wedge dx^{i_k}$ and $\eta^{i_1,\cdots,i_k}=g^{i_1j_1}\cdots g^{i_kj_k}\eta_{i_1,\cdots,i_k}$. For a fixed $k\geq 0$, the Hodge star operator $*:\Lambda^k(M)\rightarrow\Lambda^{n-k}(M)$ is defined by
$$*\omega=sgn(I,J)\omega_{i_1,\cdots,i_k}dx^{j_1}\wedge\cdots\wedge dx^{j_{n-k}},$$
for $\omega=\omega_{i_1,\cdots,i_k}dx^{i_1}\wedge\cdots\wedge dx^{i_k}\in\Lambda^k(M)$. Here $j_1<\cdots<j_{n-k}$ is the rearrangement of the complements of $i_1<\cdots <i_k$ in the set $\{1,...,n\}$ in ascending order and $sgn(I,J)$ is the sign of the permutation $i_1,...,i_k,j_1,...,j_{n-k}$. For an oriented Riemannian manifold $M$, the global inner product in $\Lambda^k(M)$ is defined by
$$\langle \omega,\eta\rangle=\int_M \omega\wedge *\eta,$$
for $\omega,\eta\in\Lambda^k(M)$. We define the global norm of $\omega\in\Lambda^k(M)$ by $\|\omega\|^2=\langle \omega,\omega\rangle$ and remark that $\|\omega\|^2\leq \infty$. There is a natural adjoint operator of the exterior derivative $d:\Lambda^k(M)\rightarrow\Lambda^{k+1}(M)$ called the co-differential operator $\delta:\Lambda^k(M)\rightarrow\Lambda^{k-1}(M)$ and is defined by
$$\delta=(-1)^k*^{-1}d*=(-1)^{n(k+1)+1}*d*,$$
so that the following diagram commutes\\
\begin{center}
$\begin{CD}
\Lambda^k(M) @>*>> \Lambda^{n-k}(M)\\
@VV\delta V @VV d V\\
\Lambda^{k-1}(M) @>(-1)^k*>> \Lambda^{n-k+1}(M)
\end{CD}$
\end{center}
\vspace{.6 cm}
For a $1$-form $\omega\in\Lambda^1(M)$, we have
\begin{eqnarray*}
(d\omega)_{ij}&=&\nabla_i\omega_j-\nabla_j\omega_i \quad \text{ and }\\
(\delta\omega)&=&-\nabla^i\omega_i,
\end{eqnarray*}
where $\nabla^i=g^{ij}\nabla_j$. For a detailed discussion on Hodge operator and co-differential operator see \cite{MO01}. By the notation $\Lambda^k_0(M)$, we denote the subspace of $\Lambda^k(M)$ containing all $k$-forms in $M$ with compact support. Also the completion of $\Lambda^k_0(M)$ with respect to the global inner product $\langle,\rangle$ is denoted by $L^s_2(M)$.
\par In this article, a vector field and its dual $1$-form with respect to the Riemannian metric $g$ will be denoted by the same letter, i.e., for a vector field $X=X^i\partial_i$, the dual $1$-form is $X=X_idx^i=g_{ij}X^jdx^i$.
\section{Main results}
\begin{defn}\cite{YO84}
 A vector field on a manofold $M$ is said to have finite global norm  if its dual $1$-form with respect to $g$ belongs to $L^1_2(M)\cap \Lambda^1(M)$.
\end{defn}
 \par Let $o\in M$ be a fixed point and $l(x)$ be the distance from $o$ to $x$ for each $x\in M$. The open ball with center $o$ and radius $r>0$ is denoted by $B(r)$. Then there exists a Lipschitz continuous function $\omega_r$ such that for some constant $K>0$, (see \cite{YA76}),
\begin{eqnarray*}
&&|d\omega_r|\leq \frac{K}{r}\qquad \text{almost everywhere on }M\\
&&0\leq \omega_r(x)\leq 1\quad\forall x\in M\\
&&\omega_r(x)=1\quad\forall x\in B(r)\\
&& \text{ supp }\omega_r\subset B(r)
\end{eqnarray*}
Then taking limit, we get $\lim\limits_{r\rightarrow\infty}\omega_r=1$. Now we calculate the global norm of $\omega_rdX$ and $\omega_r\delta X$ when $R_{ij}(\nabla^iX^j)=-\lambda R$. Then we have
\begin{eqnarray}\label{l1}
g(dX,dX)&=& 
\nonumber \frac{1}{2}\{(\nabla_iX_j-\nabla_jX_i)(\nabla^iX^j-\nabla^jX^i) \}\\
\nonumber&=&\frac{1}{2}\{4(\nabla_iX_j)(\nabla^iX^j)+4R_{ij}(\nabla^iX^j)-4\lambda \nabla^iX^i \}\\
\nonumber&=& 4g(\nabla X,\nabla X)+2 R_{ij}(\nabla^iX^j)-2\lambda(\lambda n-R)\\
&=& 4g(\nabla X,\nabla X)-2n\lambda^2,
\end{eqnarray}

\begin{eqnarray}\label{l2}
\nonumber g(\delta X,\delta X)&=&(\nabla^iX_i)(\nabla^jX_j)\\
&=&g(\lambda n-R,\lambda n-R)=|\lambda n-R|^2.
\end{eqnarray}
The above two equations (\ref{l1}) and (\ref{l2}) together imply the following lemma:
\begin{lemma}\label{L1}
If $R_{ij}(\nabla^iX^j)=-\lambda R$, then the potential vector field $X$ of the Ricci soliton (\ref{y2}) satisfies the following:
\begin{equation}\label{l3}
\|\omega_rdX\|^2_{B(2r)}=4\|\omega_r\nabla X\|^2_{B(2r)}-2n \|\omega_r\lambda\|^2_{B(2r)},
\end{equation}
\begin{equation}\label{l4}
\|\omega_r\delta X\|^2_{B(2r)}=\|\omega_r(\lambda n-R)\|^2_{B(2r)}.
\end{equation}
\end{lemma}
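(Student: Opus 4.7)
The plan is very direct: the lemma is obtained by integrating the pointwise identities (\ref{l1}) and (\ref{l2}) against the square of the cutoff function $\omega_r$. The content of those identities, namely
\[
g(dX,dX) = 4\,g(\nabla X,\nabla X) - 2n\lambda^{2},\qquad g(\delta X,\delta X) = |\lambda n - R|^{2},
\]
has just been derived from the Ricci soliton equation (\ref{y2}), its trace $\nabla^{i}X_{i} = \lambda n - R$, and the standing hypothesis $R_{ij}\nabla^{i}X^{j} = -\lambda R$; I would invoke these without modification.

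To pass to (\ref{l3}) and (\ref{l4}), I would observe that $\omega_r$ is a scalar function, so that $g(\omega_r\, dX,\, \omega_r\, dX) = \omega_r^{2}\, g(dX,dX)$ and, analogously, $g(\omega_r\,\delta X, \omega_r\,\delta X) = \omega_r^{2}\, g(\delta X,\delta X)$, $g(\omega_r\,\nabla X, \omega_r\,\nabla X) = \omega_r^{2}\, g(\nabla X,\nabla X)$, and $|\omega_r\lambda|^{2} = \omega_r^{2}\lambda^{2}$, $|\omega_r(\lambda n - R)|^{2} = \omega_r^{2}|\lambda n - R|^{2}$. Multiplying the two displayed pointwise identities by $\omega_r^{2}$ and integrating over $B(2r)$ with respect to the Riemannian volume measure then yields exactly (\ref{l3}) and (\ref{l4}). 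Since $\operatorname{supp}\omega_r \subset B(r) \subset B(2r)$, the integrands have compact support inside $B(2r)$, so the restriction of the domain poses no issue and each side is finite.

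I do not anticipate any essential obstacle: the only care required is to keep track of the inner-product conventions on forms recorded in the preliminaries (the factor $1/2$ hidden in the formula for $|dX|^{2}$ on $2$-forms and the sign convention $\delta X = -\nabla^{i}X_{i}$ on $1$-forms), both of which have already been used in deriving (\ref{l1}) and (\ref{l2}). The quantitative properties of $\omega_r$, in particular the bound $|d\omega_r|\le K/r$, play no role at this stage; they are reserved for later arguments in which one actually lets $r\to\infty$, whereas the present lemma is merely the packaging of a pointwise calculation into a cutoff form ready for that limit.
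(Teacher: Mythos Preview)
Your proposal is correct and matches the paper's approach exactly: the paper simply states that the pointwise identities (\ref{l1}) and (\ref{l2}) imply the lemma, and your explanation---multiply by $\omega_r^{2}$ and integrate over $B(2r)$---is precisely the implicit step. Your remarks about $\operatorname{supp}\omega_r\subset B(2r)$ and the irrelevance of the gradient bound on $\omega_r$ at this stage are also accurate.
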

Combining lemma 2 and lemma 3 of \cite{YO84}, we have
\begin{lemma}\cite{YO84}\label{L3}
For any $X\in \Lambda^1(M)$, we have
\begin{equation}\label{l5}
4\langle \omega_r d\omega_r \otimes X,\nabla X\rangle_{B(2r)}+\langle \omega_r \nabla^2X,\omega_r X\rangle_{B(2r)}+2\langle \omega_r\nabla X,\omega_r\nabla X\rangle _{B(2r)}=0.
\end{equation}
\begin{eqnarray}
\nonumber &&\langle \omega_r \Re X,\omega_r X\rangle_{B(2r)}= \langle \omega_r\nabla^2 X,\omega_r X\rangle_{B(2r)}+\langle \omega_rdX,\omega_rdX\rangle_{B(2r)}\\
&&+2\langle \omega_rdX,d\omega_r\wedge X\rangle_{B(2r)}+\langle \omega_r\delta X,\omega_r\delta X\rangle_{B(2r)}-2\langle \omega_r\delta X,*(d\omega_r\wedge*X)\rangle_{B(2r)},
\end{eqnarray}
where $(\nabla^2 X)_i=\nabla^j\nabla_jX_i$, $(\nabla X)_{ij}=\nabla_iX_j$ and $(\Re X)_i=R^j_iX_j$ is the Ricci transformation on $\Lambda^1(M)$.
\end{lemma}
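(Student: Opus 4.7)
Both equations are integration-by-parts identities, and the cutoff $\omega_r$ makes everything compactly supported inside $B(2r)$, so Stokes' theorem kills all boundary contributions. The whole argument is computational: choose the right compactly supported divergence for (\ref{l5}), and apply the Weitzenb\"ock formula combined with the adjointness of $d$ and $\delta$ for the second identity.

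For (\ref{l5}), I would apply the divergence theorem to the compactly supported vector field with $j$-th component $\omega_r^2 X^i \nabla^j X_i$ inside $B(2r)$. Expanding $\nabla_j(\omega_r^2 X^i \nabla^j X_i)$ via the Leibniz rule splits off three pieces: the derivative falling on $\omega_r^2$ supplies the cross term $\langle \omega_r d\omega_r \otimes X, \nabla X\rangle_{B(2r)}$; the derivative falling on $X^i$ supplies $\langle \omega_r \nabla X, \omega_r \nabla X\rangle_{B(2r)}$; and the derivative falling on $\nabla^j X_i$ produces $\langle \omega_r \nabla^2 X, \omega_r X\rangle_{B(2r)}$ once the definition $(\nabla^2 X)_i = \nabla^j \nabla_j X_i$ is unwound. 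Tracking the combinatorial factors from the two factors of $\omega_r$ and from the Leibniz rule then gives the stated linear relation.

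For the second identity, the plan is to invoke the classical Weitzenb\"ock formula for $1$-forms,
\begin{equation*}
\Re X = (d\delta + \delta d) X + \nabla^2 X,
\end{equation*}
(with the sign convention already fixed by $(\nabla^2 X)_i = \nabla^j\nabla_j X_i$), and pair with $\omega_r^2 X$. The rough Laplacian term yields $\langle \omega_r \nabla^2 X, \omega_r X\rangle_{B(2r)}$ immediately. For the Hodge pieces, since $\omega_r^2 X$ has compact support the operators may be shifted to the other argument by adjointness,
\begin{equation*}
\langle d\delta X, \omega_r^2 X\rangle = \langle \delta X, \delta(\omega_r^2 X)\rangle, \qquad \langle \delta d X, \omega_r^2 X\rangle = \langle dX, d(\omega_r^2 X)\rangle,
\end{equation*}
both evaluated on $B(2r)$. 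Applying the Leibniz rules
\begin{equation*}
d(\omega_r^2 X) = 2 \omega_r\, d\omega_r \wedge X + \omega_r^2\, dX, \qquad \delta(\omega_r^2 X) = -2 \omega_r\, g(d\omega_r, X) + \omega_r^2\, \delta X,
\end{equation*}
together with the identification $g(d\omega_r, X) = *(d\omega_r \wedge *X)$ valid for $1$-forms, reads off precisely the four mixed terms on the right of the stated equation.

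The main obstacle is not conceptual but combinatorial: one must keep signs consistent with the convention $\delta = (-1)^k *^{-1} d *$ stated in the introduction, carefully distinguish $d\omega_r \otimes X$ (a $2$-tensor) from $d\omega_r \wedge X$ (a $2$-form) in the pairings, and properly distribute the two factors of $\omega_r$ through each Leibniz expansion so that the answer is expressed as a pairing of $\omega_r$-weighted quantities. Since the identities appear verbatim as Lemmas 2 and 3 of Yorozu \cite{YO84}, the cleanest route is to cite those directly and simply restate them in the notation of this paper rather than redo the calculation.
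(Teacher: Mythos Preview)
Your proposal is correct and aligns with the paper's treatment: the paper does not prove this lemma at all but simply imports it as ``Lemmas 2 and 3 of \cite{YO84},'' exactly the route you recommend at the end. The integration-by-parts/Weitzenb\"ock outline you sketch is the standard argument behind those lemmas, so there is nothing to add.
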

\begin{lemma}\cite{AN65}\label{L2}
For any $X\in \Lambda^k(M)$, there exists a positive constant $A$ independent of $r$ such that
\begin{eqnarray*}
\|d\omega_r\otimes X\|^2_{B(2r)}&\leq &\frac{A}{r^2}\|X\|^2_{B(2r)},\\
\|d\omega_r\wedge X\|^2_{B(2r)}&\leq &\frac{A}{r^2}\|X\|^2_{B(2r)}\\
 \text{ and }\|d\omega_r\wedge * X\|^2_{B(2r)}&\leq &\frac{A}{r^2}\|X\|^2_{B(2r)}.
\end{eqnarray*}
\end{lemma}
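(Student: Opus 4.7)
The plan is to combine three ingredients: the almost-everywhere pointwise bound $|d\omega_r|\le K/r$ from the defining properties of Yau's cutoff, the fact that $d\omega_r$ is supported in the annulus where $\omega_r$ is nonconstant (and hence inside $B(2r)$, reading the paper's support condition as supp $\omega_r\subset B(2r)$, which is clearly the intended convention given the appearance of $B(2r)$ throughout Lemma \ref{L1}), and elementary pointwise multilinear-algebra inequalities controlling the norms of $d\omega_r\otimes X$, $d\omega_r\wedge X$, and $d\omega_r\wedge *X$ by $|d\omega_r|\cdot|X|$ up to a dimensional factor.

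First I would verify the pointwise estimates. A direct computation in an orthonormal coframe at a point yields the identity $|d\omega_r\otimes X|^2 = |d\omega_r|^2|X|^2$. The analogous inequalities $|d\omega_r\wedge X|^2\le C_1(n,k)|d\omega_r|^2|X|^2$ and $|d\omega_r\wedge *X|^2\le C_2(n,k)|d\omega_r|^2|X|^2$ then follow from the combinatorics of alternation together with the fact that $*:\Lambda^k(M)\to\Lambda^{n-k}(M)$ is a pointwise isometry, so that $|*X|=|X|$.

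Next I would integrate each of these pointwise inequalities over $M$. Using that the integrand vanishes outside $B(2r)$ (because $d\omega_r\equiv 0$ there) and that $|d\omega_r|^2\le K^2/r^2$ on its support, one obtains
$$\|d\omega_r\otimes X\|^2_{B(2r)} \le \frac{K^2}{r^2}\int_{B(2r)}|X|^2\,dV = \frac{K^2}{r^2}\|X\|^2_{B(2r)},$$
together with the analogous bounds for $\|d\omega_r\wedge X\|^2_{B(2r)}$ and $\|d\omega_r\wedge *X\|^2_{B(2r)}$ with $C_1K^2$ and $C_2K^2$ respectively. Taking $A=\max\{K^2,C_1K^2,C_2K^2\}$ completes the proof, and the resulting constant plainly depends on $n, k, K$ but not on $r$.

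The main obstacle, such as it is, lies in pinning down the multilinear constants $C_1(n,k)$ and $C_2(n,k)$ with a dimension-only dependence; once one computes in an orthonormal coframe this reduces to a standard Cauchy–Schwarz bookkeeping on multi-indices. Since the statement only demands $r$-independence of $A$, nothing delicate occurs: all three quantities are already controlled pointwise by $|d\omega_r|\cdot|X|$, and the proof is essentially an exercise in combining this with the standard cutoff estimate.
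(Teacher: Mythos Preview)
Your argument is correct and is precisely the standard one. Note, however, that the paper does not supply its own proof of this lemma: it is quoted from \cite{AN65} (Andreotti) and simply invoked as a known estimate, so there is no in-text proof to compare against. Your reconstruction---pointwise control of $|d\omega_r\otimes X|$, $|d\omega_r\wedge X|$, and $|d\omega_r\wedge *X|$ by a dimensional constant times $|d\omega_r|\,|X|$, followed by the cutoff bound $|d\omega_r|\le K/r$ and integration over $B(2r)$---is exactly how such inequalities are derived in the cited literature, and your reading of the support condition as $\operatorname{supp}\omega_r\subset B(2r)$ is the intended one (the paper's stated condition $\operatorname{supp}\omega_r\subset B(r)$ together with $\omega_r\equiv 1$ on $B(r)$ is a typographical slip).
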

From Lemma \ref{L1} and Lemma \ref{L2}, we have the following:
\begin{eqnarray}
\nonumber |2\langle \omega_r dX,d\omega_r\wedge X\rangle_{B(2r)}|&\leq & 2\|\omega_r dX\|_{B(2r)}\|d\omega_r\wedge X\|_{B(2r)}\\
\nonumber&\leq & \frac{1}{4}\|\omega_r dX\|^2_{B(2r)}+4\|d\omega_r\wedge X\|^2_{B(2r)}\\
\nonumber&\leq & \|\omega_r\nabla X\|^2_{B(2r)}-\frac{n}{2}\|\omega_r\lambda\|^2_{B(2r)}+\frac{4A}{r^2}\|X\|_{B(2r)}^2,
\end{eqnarray}
\begin{eqnarray}
\nonumber|2\langle \omega_r \delta X,*(d\omega_r\wedge*X)\rangle_{B(2r)}|&\leq & 2\|\omega_r \delta X\|_{B(2r)}\|d\omega_r\wedge *X\|_{B(2r)}\\
\nonumber&\leq & \frac{1}{5}\|\omega_r \delta X\|^2_{B(2r)}+5\|d\omega_r\wedge *X\|^2_{B(2r)}\\
&\leq & \frac{1}{5}\|\omega_r(\lambda n-R)\|^2_{B(2r)}+\frac{5A}{r^2}\|X\|_{B(2r)}^2.
\end{eqnarray}
Thus using Lemma \ref{L3}, we calculate for Ricci soliton (\ref{y2}):
\begin{eqnarray}\label{l6}
\nonumber\langle \omega_r\Re X,\omega_r X\rangle_{B(2r)}&=&-4\langle \omega_r d\omega_r\otimes X,\nabla X\rangle _{B(2r)}-2\langle \omega_r\nabla X,\omega_r\nabla X\rangle_{B(2r)}\\
\nonumber&&+\langle \omega_r dX,\omega_r dX\rangle_{B(2r)}+2\langle \omega_r dX,d\omega_r\wedge X\rangle_{B(2r)}\\
\nonumber&&+\langle \omega_r\delta X,\omega_r\delta X\rangle_{B(2r)}-2\langle\omega_r\delta X,*(d\omega_r\wedge *X)\rangle_{B(2r)}\\
\nonumber&\geq & -\frac{1}{2}\|\omega_r\nabla X\|^2 _{B(2r)}-\frac{8A}{r^2}\|X\|^2 _{B(2r)}-2\|\omega_r\nabla X\|^2 _{B(2r)}+4\|\omega_r\nabla X\|^2 _{B(2r)}\\
\nonumber&&-2n \|\omega_r\lambda\|^2_{B(2r)}-\|\omega_r\nabla X\|^2 _{B(2r)}+\frac{n}{2}\|\omega_r\lambda\|^2_{B(2r)}-\frac{4A}{r^2}\|X\|_{B(2r)}^2\\
\nonumber&&+\|\omega_r(\lambda n-R)\|^2_{B(2r)}-\frac{1}{5}\|\omega_r(\lambda n-R)\|^2_{B(2r)}-\frac{5A}{r^2}\|X\|^2 _{B(2r)}\\
&=& \frac{1}{2}\|\omega_r\nabla X\|^2 _{B(2r)}-\frac{17A}{r^2}\|X\|^2 _{B(2r)}-\frac{3n}{2}\|\omega_r\lambda\|^2 _{B(2r)}+ \frac{4}{5}\|\omega_r(\lambda n-R)\|^2_{B(2r)}.
\end{eqnarray}
If the Ricci curvature is non-positive then we can write
$$\limsup_{r\rightarrow\infty}\langle \omega_r\Re X,\omega_r X\rangle_{B(2r)}\leq 0.$$
Hence (\ref{l6}) reduces to the following inequality:
\begin{equation*}\label{l7}
 5\|\nabla X\|^2-15 n\|\lambda\|^2+8\|(\lambda n-R)\|^2\leq 0,
\end{equation*}
which yields
$$5\|\nabla X\|^2+\int_M(-15n\lambda^2+8\lambda^2n^2-16\lambda nR+8R^2)dv\leq 0.$$
Therefore, for $n\geq 2$, the above inequality gives
\begin{equation}\label{e2}
5\|\nabla X\|^2+\int_M(-16\lambda nR+8R^2)dv\leq 0.
\end{equation}
Hence  we get
\begin{equation}\label{e1}
5\|\nabla X\|^2\leq 16\lambda n\int_MR.
\end{equation}
\begin{theorem}
Let $(M,g,X)$ be an expanding Ricci soliton which is complete and non-compact. If the Ricci curvature is non-positive and the following conditions hold:\\
(i) $R_{ij}(\nabla^iX^j)=R/2,$ and\\
(ii) $\int_MR\geq -c$ for some positive constant $c$,
then the potential vector field has finite global norm. Moreover, if the manifold has finite volume, then
$$\int_MR^2\leq ncVol(M).$$
\end{theorem}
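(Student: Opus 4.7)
The plan is to feed conditions (i) and (ii) directly into the chain of estimates developed just above the theorem, so that (\ref{e1}) becomes an explicit finite bound. First, I would identify condition (i), $R_{ij}(\nabla^i X^j)=R/2$, as exactly the hypothesis $R_{ij}(\nabla^i X^j)=-\lambda R$ of Lemma~\ref{L1} specialized to the normalized expanding case $\lambda=-1/2$ coming from (\ref{e3}). Consequently Lemmas~\ref{L1}, \ref{L3}, and \ref{L2} all apply to the potential vector field $X$, and the estimate (\ref{l6}) is valid for every $r>0$.

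Next, the non-positive Ricci hypothesis gives $\langle \omega_r \Re X,\omega_r X\rangle_{B(2r)}=\int_{B(2r)}\omega_r^{\,2}R_{ij}X^iX^j\,dv\le 0$ for every $r$. Combining this upper bound on the left side of (\ref{l6}) with the lower bound on its right side, rearranging, and letting $r\to\infty$, one recovers (\ref{e2}) (after discarding the non-negative contribution $8\lambda^2 n^2-15n\lambda^2$, valid for $n\ge 2$), and hence the sharper consequence (\ref{e1}): $5\|\nabla X\|^2\le 16\lambda n\int_M R$. With $\lambda=-1/2$ this reads $5\|\nabla X\|^2\le -8n\int_M R$, so condition (ii) gives $\|\nabla X\|^2\le 8nc/5<\infty$, which is the asserted finite global norm of the potential vector field.

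For the \emph{moreover} part I would retain the $R^2$ term in (\ref{e2}) rather than dropping it: discarding only the non-negative $5\|\nabla X\|^2$ contribution and substituting $\lambda=-1/2$ yields $8\int_M R^2\,dv\le -8n\int_M R\,dv$. Using the hypothesis on $R$ together with the finite-volume assumption to bound $-\int_M R\le c\,\mathrm{Vol}(M)$, and dividing by $8$, gives $\int_M R^2\le nc\,\mathrm{Vol}(M)$, as required.

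The main obstacle is the passage $r\to\infty$ inside (\ref{l6}), i.e.\ verifying that the error term $\frac{17A}{r^2}\|X\|^2_{B(2r)}$ vanishes in the limit even though $X\in L^2$ is not known a priori. Completeness guarantees each $\|X\|^2_{B(2r)}$ is finite (closed metric balls are compact by Hopf--Rinow), so what is really needed is the growth control $\|X\|^2_{B(2r)}=o(r^2)$. This ought to follow from the soliton equation, which ties the symmetric part of $\nabla X$ to $\lambda g-\mathrm{Ric}$, together with the non-positivity of $\mathrm{Ric}$ and condition (ii); once this step is in place, the rest of the proof is purely arithmetic.
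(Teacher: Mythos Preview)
Your proposal is correct and follows essentially the same route as the paper: normalize the expanding soliton to $\lambda=-\tfrac12$, substitute into (\ref{e1}) together with condition (ii) to bound $\|\nabla X\|^2$, and then return to (\ref{e2}) (dropping the non-negative $5\|\nabla X\|^2$ term) for the $R^2$-estimate. The obstacle you flag about $\tfrac{17A}{r^2}\|X\|^2_{B(2r)}\to 0$ in the passage $r\to\infty$ is not addressed in the paper's argument either; it is silently absorbed when going from (\ref{l6}) to (\ref{e2}).
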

\begin{proof}
Since $(M,g,X)$ is a complete non-compact expanding Ricci soliton, we can take $\lambda=-\frac{1}{2}$. Now putting the value of $\lambda$ in (\ref{e1}) and using the given conditions, we get our first result. Also, (\ref{e2}) implies that
$$\int_MR^2\leq -n\int_MR\leq nc Vol(M).$$
\end{proof}
\begin{theorem}
Let $(M,g,X)$ be a complete non-compact Ricci soliton with finite volume. If the potential vector field $X$ is of finite global norm, then the scalar curvature $R$ must be constant. 
\end{theorem}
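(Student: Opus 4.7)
The plan is to show that $\delta X \equiv 0$, equivalently that $R \equiv n\lambda$ is a constant, by running a Yano-style cutoff integration-by-parts argument in the spirit of the derivations preceding the first theorem.

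The first observation is purely algebraic: contracting the soliton equation (\ref{y2}) with $g^{ij}$ gives the trace identity $\delta X = R - n\lambda$, valid for every Ricci soliton (the calculation of (\ref{l2}) needs no auxiliary hypothesis). Pairing the scalar $\omega_r^2$ with the $1$-form $X$ via the adjointness $\langle f,\delta X\rangle = \langle df, X\rangle$ then yields
\[ \int_M \omega_r^2\,\delta X\,dv = 2\int_M \omega_r\,g(d\omega_r, X)\,dv. \]
By Cauchy--Schwarz and Lemma~\ref{L2}, the right-hand side is bounded by $\tfrac{2\sqrt{A\,\mathrm{Vol}(M)}\,\|X\|}{r}$, which vanishes as $r \to \infty$ since $\mathrm{Vol}(M) < \infty$ and $\|X\| < \infty$. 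Thus $\int_M (R - n\lambda)\,dv = 0$, so if $R$ is constant at all it must equal $n\lambda$. It remains to upgrade this integral identity to the vanishing of $\|\delta X\|^2 = \int_M (R - n\lambda)^2\,dv$, i.e., pointwise constancy.

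Applying the same adjointness to the scalar $\omega_r^2(R - n\lambda)$ paired with $X$ gives
\[ \|\omega_r \delta X\|^2 = 2\int_M \omega_r(R - n\lambda)\,g(d\omega_r, X)\,dv + \int_M \omega_r^2\,g(dR, X)\,dv. \]
The first piece is absorbed into $\tfrac12 \|\omega_r \delta X\|^2 + \tfrac{2A}{r^2}\|X\|^2$ by Cauchy--Schwarz, Lemma~\ref{L2}, and AM-GM. The hard part is the $dR$-term: a second scalar integration by parts loops tautologically back to the previous identity. To break this tautology I invoke the Ricci-soliton identity $\Delta X + \Re X = 0$, obtained by taking $\nabla^j$ of (\ref{y2}), using the contracted Bianchi identity $\nabla^j R_{ij} = \tfrac12\nabla_i R$, and the Ricci commutation $\nabla^j \nabla_i X_j = \nabla_i \nabla^j X_j + R_{il}X^l$; this is fed into the Weitzenb\"ock identity of Lemma~\ref{L3}, which genuinely couples $\delta X$ with $\nabla X$ and $\Re X$. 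After standard integration-by-parts bookkeeping (with every cross term involving $d\omega_r$ handled via Lemma~\ref{L2}), one arrives at a non-tautological bound
\[ \tfrac14 \|\omega_r \delta X\|^2 \leq \tfrac{C}{r^2}\bigl(\|X\|^2 + \|\nabla X\|^2\bigr), \]
and letting $r \to \infty$ forces $\|\delta X\|^2 = 0$, whence $R \equiv n\lambda$.

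The principal obstacle is this last step: algebraically identifying the right linear combination of Lemma~\ref{L3} and the identity $\Delta X = -\Re X$ so that scalar IBP cancellations do not render the estimate vacuous. Two subordinate issues must be handled en route --- verifying that $R \in L^1(M)$ (so that dominated convergence may be used in the integral identity) and that $\nabla X \in L^2(M)$ (so that AM-GM absorption is legitimate) --- both of which can be extracted from the Weitzenb\"ock identity combined with the finite volume of $M$ and $\|X\|_{L^2} < \infty$, provided these bounds are established in the correct order.
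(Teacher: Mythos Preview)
Your first paragraph --- the cutoff integration-by-parts yielding $\int_M (R-n\lambda)\,dv = 0$ --- is exactly the paper's entire proof. The paper shows $\liminf_{r\to\infty}\tfrac{1}{r}\int_{B(2r)}|X|\,dV = 0$ via Cauchy--Schwarz (finite volume plus $\|X\|<\infty$), bounds $\bigl|\int_{B(2r)}\omega_r\,\mathrm{div}\,X\,dV\bigr|\leq\tfrac{C}{r}\int_{B(2r)}|X|\,dV$, uses $\mathrm{div}\,X = n\lambda - R$ to obtain $\int_M(n\lambda-R)\,dV = 0$, and then simply writes ``which gives $R = n\lambda$'' and ends.

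You are correct that this last inference is not justified as written: the vanishing of the integral does not by itself force $R-n\lambda$ to vanish pointwise, and your remaining paragraphs attempt far more than the paper does. Your strategy of bounding $\|\omega_r\delta X\|^2$ through the soliton identity $\Delta X + \Re X = 0$ combined with Lemma~\ref{L3} is a natural idea, but as you yourself note it is not carried through: the $dR$-term, the $L^2$-finiteness of $\nabla X$, and the ``right linear combination avoiding tautology'' are all left open, and without a sign assumption on the Ricci curvature (which this theorem does not impose) the term $\langle\omega_r\Re X,\omega_r X\rangle$ has no usable sign. In short, you have reproduced the paper's argument, correctly flagged a gap the paper passes over in silence, and sketched --- but not completed --- a route to close it.
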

\begin{proof}
For any $r>0$, we have
\begin{eqnarray}
\nonumber\frac{1}{r}\int_{B(2r)}|X|dV & \leq & \Big(\int_{B(2r)}\langle X,X\rangle dV \Big)^{1/2} \Big(\int_{B(2r)}\Big(\frac{1}{r} \Big)^2 dV\Big)^{1/2}\\
\nonumber&\leq& \|X\|_{B(2r)}\frac{1}{r}\Big(Vol(M)\Big)^{1/2},
\end{eqnarray}
where $Vol(M)$ denotes the volume of $M$. Thus we obtain
\begin{equation*}
\liminf_{\ r\rightarrow \infty} \frac{1}{r}\int_{B(2r)}|X|dV=0.
\end{equation*}
Again we have
\begin{equation*}
\Big|\int_{B(2r)}\omega_r div X dV \Big|\leq \frac{C}{r}\int_{B(2r)}|X|dV,
\end{equation*}
for some constant C. In view of $(\ref{y1})$ the last relation yields
\begin{equation*}
\int_M (n\lambda-R)dV=0,
\end{equation*}
which gives $R=n\lambda$. This completes the proof.
\end{proof}

\section*{Acknowledgment}
 The third author gratefully acknowledges to the
 CSIR(File No.:09/025(0282)/2019-EMR-I), Govt. of India for financial assistance.

\end{document}